\newtheorem{thm}{Theorem}[section]
\newtheorem{cor}[thm]{Corollary}
\theoremstyle{definition}
\newtheorem{rem}[thm]{Remark}
\title[A WEIGHTED RELATIVE ISOPERIMETRIC INEQUALITY]{A WEIGHTED RELATIVE ISOPERIMETRIC INEQUALITY IN CONVEX CONES
}
\author{Emanuel Indrei}
\address{Department of Mathematics\\
Purdue University\\
West Lafayette, Indiana \\
USA.}
\begin{document}
\setcounter{page}{1}
\pagenumbering{arabic}
\maketitle
\begin{abstract}
A weighted relative isoperimetric inequality in convex cones is obtained via the Monge-Ampere equation. The method improves several inequalities in the literature, e.g. constants in a theorem of Cabre--Ros--Oton--Serra. Applications are given in the context of a generalization of the log-convex density conjecture due to Brakke and resolved by Chambers: in the case of $\alpha-$homogeneous ($\alpha>0$), concave densities, (mod translations) balls centered at the origin and intersected with the cone are proved to uniquely minimize  the weighted perimeter with a weighted mass constraint. In particular, if the cone is taken to be $\{x_n>0\}$, reflecting the density, balls intersected with $\{x_n>0\}$ remain (mod translations) unique minimizers in the $\mathbb{R}^n$ analog in the case when the density vanishes on $\{x_n=0\}$. 
\end{abstract}

\vskip .5in

Suppose $\mathcal{C} \subset \mathbb{R}^n$ is an open, convex cone ($n \ge 2$). If $|E|<\infty$ is a set of finite perimeter with reduced boundary $\mathcal{F} E$, $K = B_1 \cap \mathcal{C}$, then
$$
n|E|^{\frac{n-1}{n}}|K|^{\frac{1}{n}} \le \mathcal{H}^{n-1}(\mathcal{F} E \cap \mathcal{C})
$$
where equality holds if and only if $E =$ mod translations (if the cone contains lines) $sK$ for $s > 0$. If $|E| = |K|$, the inequality states

$$
n|E| \le \mathcal{H}^{n-1}(\mathcal{F}E \cap \mathcal{C})
$$
and this via scaling is equivalent; a sharp stability result for this inequality
was proved in Figalli-Indrei \cite{MR3023863}: the simple version states that if $\mathcal{C}$ contains no line

$$
|E\Delta K| \lesssim \Big(\frac{\mathcal{H}^{n-1}(\mathcal{F} E \cap \mathcal{C})}{n|E|}-1 \Big)^{\frac{1}{2}}
$$
($E\Delta K$ is the symmetric difference). In the theorem below, a weighted version of the relative isoperimetric inequality in convex cones is shown. The proof is based on the Monge-Ampere equation \cite{MR1426885, MR1177479, MR1454261, TrW}

$$
\det D^2 \phi = f.
$$

\begin{thm} \label{OT}
If $\mathcal{C} \subset \mathbb{R}_{+}^n$ is an open convex cone, $E \subset \mathcal{C}$ is a set of finite perimeter with $|E|=|K|$, $h(x) \ge 0$, $
\inf_{a\in K} \nabla h \cdot a \ge 0
$, then 
$$
\int_E nh(x)dx \le \int_{\mathcal{F} E \cap \mathcal{C}} h(x) d \mathcal{H}^{n-1};
$$
moreover, if $\mathcal{C}\subsetneq \mathbb{R}_{+}^n$, $h=h(x_n)>0$ for $x_n>0$, $\&$ equality holds, then (up to sets of measure zero) $E=K$ (if $E=K$, then 
$$
\int_K nhdx+\int_K\nabla h \cdot xdx=\int_{\partial K \cap \mathcal{C}} hd\mathcal{H}^{n-1}).
$$
\end{thm}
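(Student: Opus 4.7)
The strategy is to follow the Brenier/Monge--Amp\`ere approach to relative isoperimetric inequalities in cones, in the spirit of Cabr\'e--Ros-Oton--Serra and Figalli--Indrei, with the weight $h$ incorporated through integration by parts. Since $|E|=|K|$, take the convex Brenier potential $\phi$ whose gradient $T=\nabla\phi\colon E\to K$ pushes $\mathbf{1}_E\,dx$ forward onto $\mathbf{1}_K\,dx$; the Jacobian identity is the Monge--Amp\`ere equation $\det D^2\phi=1$ a.e.\ on $E$, and the arithmetic--geometric mean inequality applied to the nonnegative eigenvalues of $D^2\phi$ yields
\[
n \;=\; n(\det D^2\phi)^{1/n} \;\le\; \operatorname{tr}(D^2\phi) \;=\; \operatorname{div} T \qquad \text{a.e. on } E.
\]

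Multiplying by $h\ge 0$ and integrating by parts in the BV sense,
\[
n\int_E h\,dx \;\le\; \int_E h\operatorname{div} T\,dx \;=\; \int_{\partial^{*}E} h\,(T\cdot\nu_E)\,d\mathcal{H}^{n-1} \;-\; \int_E \nabla h\cdot T\,dx.
\]
Split $\partial^{*}E=(\mathcal{F} E\cap\mathcal{C})\sqcup(\mathcal{F} E\cap\partial\mathcal{C})$. On the first piece, $T(x)\in\bar K\subset\bar B_1$ yields $T\cdot\nu_E\le |T|\le 1$. On the second, the outer normal agrees with $\nu_\mathcal{C}$; since $\mathcal{C}$ is a convex cone one has $x\cdot\nu_\mathcal{C}(x)=0$, and convexity applied to $T(x)\in\bar{\mathcal{C}}$ gives $T\cdot\nu_\mathcal{C}\le 0$. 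For the volume term, $T(x)\in\bar K$ together with the hypothesis $\inf_{a\in K}\nabla h(x)\cdot a\ge 0$ forces $\nabla h(x)\cdot T(x)\ge 0$ pointwise, so $-\int_E\nabla h\cdot T\le 0$. Collecting the three estimates yields the desired $n\int_E h\,dx\le\int_{\mathcal{F}E\cap\mathcal{C}} h\,d\mathcal{H}^{n-1}$.

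For rigidity, equality in AM--GM combined with $\det D^2\phi=1$ forces $D^2\phi=I$ a.e., so $T(x)=x+c$ and $E=K-c$ up to null sets. The boundary equality $T\cdot\nu_\mathcal{C}=0$ on $\mathcal{F}E\cap\partial\mathcal{C}$ reduces to $c\cdot\nu_\mathcal{C}(x)=0$, and the volume equality $\nabla h\cdot T=0$ reduces to $h'(x_n)(x_n+c_n)=0$ a.e.\ on $E$; the structural assumption $\mathcal{C}\subsetneq\mathbb{R}^n_+$ confines $c$ to the lineality of $\mathcal{C}$ (hence to $\{x_n=0\}$), and the positivity/monotonicity of $h(x_n)$ combined with $x_n>0$ on $E$ then rules out any remaining translation, giving $E=K$. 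The closing identity is just the divergence theorem applied to $V(x)=h(x)x$ on $K$: $\operatorname{div} V=nh+\nabla h\cdot x$, while $V\cdot\nu_K=h$ on the spherical face (where $\nu_K=x$ and $|x|=1$) and $V\cdot\nu_\mathcal{C}=h(x\cdot\nu_\mathcal{C})=0$ on the conical face. The principal technical obstacle is justifying the weighted integration by parts rigorously: $T=\nabla\phi$ is only a BV vector field and $\partial E$ meets $\partial\mathcal{C}$ along a singular stratum, so one must either approximate $T$ by smooth maps or carefully control the singular part of $\operatorname{div} T$ against $h$ near the conical corner --- the same delicate step that appears throughout the Monge--Amp\`ere-based proofs of cone isoperimetric inequalities.
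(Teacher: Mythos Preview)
Your proof of the inequality is essentially the same as the paper's: Brenier map $T=\nabla\phi$ from $E$ to $K$, $\det DT=1$, AM--GM to get $n\le\operatorname{div}T$, multiply by $h$, integrate by parts, drop $-\int_E\nabla h\cdot T$ using the hypothesis, and estimate the boundary term using $|T|\le 1$ on $\mathcal{F}E\cap\mathcal{C}$ and $T\cdot\nu_{\mathcal{C}}\le 0$ on $\mathcal{F}E\cap\partial\mathcal{C}$. The paper handles the BV issue you flag by writing the distributional divergence as $\mathrm{Div}(Th)=\operatorname{div}(Th)\,dx+(\mathrm{Div}Th)_s$ with $(\mathrm{Div}Th)_s\ge 0$ (convexity of the potential) and invoking the trace formula from \cite{MR3023863}; this is exactly the missing justification you anticipate.

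Where you diverge from the paper is in the rigidity step, and there your argument has a gap. Once $DT=I$ and $T(x)=x+c$, the paper argues as follows: the volume equality gives $\int_E T_n\,h'(x_n)\,dx=0$; since $T(x)\in K$ a.e.\ one has $T_n>0$ a.e., and the hypothesis forces $h'\ge 0$, so $h'=0$ a.e.\ on the connected set $E$. Thus $h$ is \emph{constant} on $E$, and the weighted equality \eqref{er} collapses to the unweighted one, whose equality case (Figalli--Indrei) gives $E=K$ modulo translations in the lineality of $\mathcal{C}$. Your route instead tries to pin down $c$ directly and ends with ``the positivity/monotonicity of $h(x_n)$ \dots\ rules out any remaining translation.'' This last claim is not correct: once $c$ lies in the lineality space of $\mathcal{C}$ (hence $c_n=0$), the weight $h(x_n)$ is invariant under translation by $c$, so it cannot separate $K-c$ from $K$. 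Indeed the paper's Remark immediately after the theorem records that when $\mathcal{C}$ contains a line, equality holds only up to such translations. Also, your assertion that ``$\mathcal{C}\subsetneq\mathbb{R}^n_+$ confines $c$ to the lineality of $\mathcal{C}$'' is stated without proof; this does not follow from the structural assumption alone but from the boundary equality $\mathrm{tr}_E(Th)\cdot\nu_E=h$ on $\mathcal{F}E\cap\mathcal{C}$, and one must argue that any flat face of $\partial(K-c)$ lying in the interior of $\mathcal{C}$ would violate it. The paper avoids this by reducing to the unweighted case. Your divergence-theorem verification of the parenthetical identity for $E=K$ is fine.
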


\begin{proof}
Let $d\mu^+=\chi_Edx$, $d\mu^-=\chi_Kdy$ and $T_{\#}\mu^+=d\mu^-$ denote the optimal map (i.e. $T=\nabla \phi$, $\phi$ convex) then for a.e. $x \in E$
$$
\det DT(x)=1.
$$
Thus, 
\begin{align*}
\int_E nhdx&= \int_E n\big(\det DT\big)^{\frac{1}{n}} hdx\\
&\le \int_E (\text{divT})hdx=\int_E \text{divTh} dx- \int_E \nabla h \cdot \text{T}dx\\
&\le \int_E \text{divTh} dx\\
&\le \int_{E^{(1)}} \text{divTh}dx+(\text{DivTh})_s(E^{(1)})\\
&=\text{DivTh}(E^{(1)})\\
&= \int_{\mathcal{F}E} \text{tr}_E(Th) \cdot \nu_E(x) d\mathcal{H}^{n-1}\\
&\le \int_{\mathcal{F}E \cap \mathcal{C}} h d \mathcal{H}^{n-1}
\end{align*}
($E^{(1)}$ is the set of density $1$, $\text{(DivTh)}_s$ is the singular part of the measure
DivTh, and $\text{tr}_E$ is the trace, see \cite{MR3023863}). Let
\begin{equation} \label{er}
\int_E nh(x_n)dx=\int_{\mathcal{F}E \cap \mathcal{C}} h(x_n) d\mathcal{H}^{n-1};
\end{equation}
in particular, there is a.e. equality in the arithmetic-geometric mean inequality which yields $DT=\text{Id}$ a.e. and $$\int_{\mathcal{F}E} \text{tr}_E(Th) \cdot \nu_E(x) d\mathcal{H}^{n-1}\\
= \int_{\mathcal{F}E \cap \mathcal{C}} h d \mathcal{H}^{n-1}$$
implies $T(x)=x+x_0$ for $x_0 \in \mathbb{R}^n$. Therefore $E$ is connected and since 
$$
\int_E T_n h'(x_n)dx=0,
$$
$h'=0$ a.e. on $E$; in particular $h$ is constant on $E$ and \eqref{er} implies $x_0=0$. (If $E=K$, then $T(x)=x.$)
\end{proof}
\begin{rem}
In the case when $\mathcal{C}$ contains a line, equality holds up to translations along the line (a general convex cone splits: $\mathcal{C}=\mathcal{C}_k \times \mathbb{R}^{n-k}$, where $\mathcal{C}_k\subset \mathbb{R}^k$ is a convex cone containing no line \cite{MR3023863, MR3385175}).
\end{rem}
\begin{rem}
In the function space $\{f: f=h\chi_E, \inf_{a\in K} \nabla h \cdot a \ge 0, |E|=|K|\}$, the inequality is sharp: equality is attained for $f=\alpha \chi_K$, $\alpha \in \mathbb{R}$.
\end{rem} 
\begin{rem} \label{le}
Weighted isoperimetric and/or Sobolev inequalities were studied in \cite{MR3576542, MR3268875, balogh2020sobolev, MR3228444, MR3005102, alvino2019isoperimetric, MR4056843, MR4011033, MR2948294, MR4135671, MR2466858, MR3038539, MR3097258, MR2645984, cinti2020sharp, MR1387625, MR1854254, MR1396954}. Furthermore, I recently proved the following result which answers a long-standing open problem due to Almgren (the anisotropic isoperimetric problem is considered in a convex background) \cite{indrei2020equilibrium}:
let $$f:\mathbb{R}^2\rightarrow [0,\infty) $$ be a surface tension (a convex positively 1-homogeneous function), 

$$
\mathcal{P}(E)=\int_{\mathcal{F}E} f(\nu_E) d\mathcal{H}^{n-1};
$$
$$
\mathcal{G}(E)=\int_E g(x)dx;
$$ 
$$
\mathcal{E}(E)=\mathcal{P}(E)+\mathcal{G}(E).
$$
Suppose $g:\mathbb{R}^2 \rightarrow [0,\infty)$ is coercive and convex. If $m\in (0,\infty)$, there exists an--up to translations $\&$ sets of measure zero--unique convex set which minimizes the free energy $\mathcal{E}$ among sets with measure $m$ (inter-alia I proved non-quantitative stability). Moreover, there exists a convex g such that there are no solutions for any $m>0$.

\end{rem}
\begin{cor}
If $\mathcal{C} \subset \mathbb{R}_{+}^n$ is an open convex cone, $E_\epsilon \subset \mathcal{C}$ is a set of finite perimeter with $|E_\epsilon|=|K|$, $h(x) \ge 0$, 
$$
\int_K \nabla h \cdot x dx > 0,
$$ 

$$ 
E_\epsilon \rightarrow K
$$
in $L^1_{loc}$,
then there exists $a(\epsilon)>0$, such that $a(\epsilon) \rightarrow 0$ as $\epsilon \rightarrow 0^+$ (mod a subsequence), $\&$ 
$$
-a(\epsilon) \int_{\partial K \cap \mathcal{C}} hd\mathcal{H}^{n-1} \le \int_{\partial E_\epsilon \cap \mathcal{C}} h d \mathcal{H}^{n-1}-\int_{\partial K \cap \mathcal{C}} h d \mathcal{H}^{n-1}.
$$
Hence, if $\frac{a(\epsilon)}{\alpha_\epsilon} \rightarrow 0^+,$
$$
\lim \inf_{\epsilon \rightarrow 0^+} \frac{1}{\alpha_\epsilon} \Big(\int_{\partial E_\epsilon \cap \mathcal{C}} h d \mathcal{H}^{n-1}-\int_{\partial K \cap \mathcal{C}} h d \mathcal{H}^{n-1} \Big)\ge 0.
$$

\end{cor}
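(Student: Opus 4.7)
The plan is to revisit the proof of Theorem~\ref{OT} applied to $E_\epsilon$ while retaining the nonpositive contribution $-\int_{E_\epsilon} \nabla h \cdot T_\epsilon\, dx$ that was discarded in the original chain; here $T_\epsilon = \nabla \phi_\epsilon$ denotes the Brenier map pushing $\chi_{E_\epsilon} dx$ onto $\chi_K dy$. Since $T_\epsilon(x) \in K$ a.e., the hypothesis $\inf_{a\in K} \nabla h \cdot a \ge 0$ forces $\nabla h(x)\cdot T_\epsilon(x) \ge 0$, and keeping this term in the chain yields the sharpened bound
$$
\int_{E_\epsilon} nh\, dx + \int_{E_\epsilon} \nabla h(x) \cdot T_\epsilon(x)\, dx \le \int_{\mathcal{F} E_\epsilon \cap \mathcal{C}} h\, d\mathcal{H}^{n-1}.
$$

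Next I would pass to the limit $\epsilon \to 0^+$ on the left-hand side. Since $K$ is bounded and $|E_\epsilon|=|K|$, the $L^1_{loc}$-convergence upgrades to $L^1(\mathbb{R}^n)$-convergence, so $\int_{E_\epsilon} nh \to \int_K nh$. For the coupling term, an explicit competitor (identity on $E_\epsilon \cap K$, any measurable bijection from $E_\epsilon \setminus K$ to $K \setminus E_\epsilon$) yields $W_2^2(\chi_{E_\epsilon}\,dx, \chi_K\,dy) \le C|E_\epsilon \Delta K| \to 0$; since the Brenier map achieves $W_2^2$, $\int_{E_\epsilon} |T_\epsilon - \mathrm{id}|^2 dx \to 0$, and Cauchy--Schwarz combined with local boundedness of $\nabla h$ near $K$ gives $\int_{E_\epsilon} \nabla h \cdot T_\epsilon \, dx \to \int_K \nabla h(x) \cdot x\, dx$.

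Combining with the equality identity $\int_K nh + \int_K \nabla h \cdot x = \int_{\partial K \cap \mathcal{C}} h$ from Theorem~\ref{OT}, one obtains
$$
\int_{\mathcal{F} E_\epsilon \cap \mathcal{C}} h\, d\mathcal{H}^{n-1} - \int_{\partial K \cap \mathcal{C}} h\, d\mathcal{H}^{n-1} \ge -\eta(\epsilon),
$$
with $\eta(\epsilon) \to 0^+$ along a suitable subsequence. Setting $a(\epsilon) := \max\{\eta(\epsilon), \epsilon\} / \int_{\partial K \cap \mathcal{C}} h\, d\mathcal{H}^{n-1}$---the denominator being strictly positive since $\int_K \nabla h \cdot x > 0$ by the equality identity---produces the first inequality with $a(\epsilon) \to 0^+$, and the second claim follows at once by dividing through by $\alpha_\epsilon$ and invoking $a(\epsilon)/\alpha_\epsilon \to 0^+$.

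The principal obstacle is the passage to the limit in the coupling $\int_{E_\epsilon} \nabla h \cdot T_\epsilon\, dx$: the Brenier maps $T_\epsilon$ are only implicitly defined, and some quantitative stability is required. The $W_2$-cost estimate above sidesteps any pointwise convergence issue and delivers exactly the needed $L^1$ bound, conditional on $\nabla h$ being locally bounded near $K$---a mild regularity assumption implicit in the statement since $\int_K \nabla h \cdot x$ is posited to be strictly positive.
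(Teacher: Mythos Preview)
Your argument follows the paper's proof essentially verbatim: both retain the term $\int_{E_\epsilon}\nabla h\cdot T_\epsilon\,dx$ in the chain of Theorem~\ref{OT}, show it converges to $\int_K\nabla h\cdot x\,dx$, and combine with the divergence identity for $K$. The only methodological difference is how that convergence is established: the paper argues by weak convergence $T_\epsilon\chi_{E_\epsilon}\rightharpoonup x\chi_K$ (using $\int_{E_\epsilon}|T_\epsilon|^2=\int_K|x|^2$ and $\limsup_\epsilon\int_{E_\epsilon}|x|^2<\infty$), whereas you bound $W_2^2(\chi_{E_\epsilon},\chi_K)$ by an explicit competitor transport and deduce $\int_{E_\epsilon}|T_\epsilon-\mathrm{id}|^2\to 0$ directly. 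Your route is a bit more quantitative and self-contained; note, however, that your estimate $W_2^2\le C|E_\epsilon\triangle K|$ with a uniform $C$ (and the subsequent use of Cauchy--Schwarz with $\nabla h$) tacitly requires the $E_\epsilon$ to remain in a fixed bounded region---the paper makes the equivalent tacit assumption $\limsup_\epsilon\int_{E_\epsilon}|x|^2<\infty$, so this is not a discrepancy between the two arguments but a shared implicit hypothesis you should flag.
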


\begin{proof}
Let $d\mu^+_\epsilon=\chi_{E_\epsilon}dx$, $d\mu^-=\chi_Kdy$ and $(T_\epsilon)_{\#}\mu^+_\epsilon=d\mu^-$ denote the optimal map. Since $T_\epsilon \chi_{E_\epsilon} \rightharpoonup x \chi_K$ (along a subsequence)
$$
\Big|\int_{E_\epsilon} T_\epsilon \cdot \nabla h dx- \int_K x \cdot \nabla h dx \Big| \le a(\epsilon) \int_K \nabla h \cdot x dx
$$
for $a(\epsilon) \rightarrow 0$ ($ \int_{E_\epsilon} |T_\epsilon|^2 dx= \int_{K} |x|^2 dx, \hskip .05in \lim\sup_{\epsilon \rightarrow 0} \int_{E_\epsilon} |x|^2 dx<\infty$).
Hence for $\epsilon$ sufficiently small, 
$$
\int_{E_\epsilon} T_\epsilon \cdot \nabla h dx \ge (1-a(\epsilon)) \int_K x \cdot \nabla h dx,
$$
$$
n\int_{E_\epsilon} h dx \ge (1-a(\epsilon))n \int_K h dx, 
$$
and the proof of the theorem implies the inequality.
\end{proof}
\begin{rem}
If $|E|\neq|K|$, the previous arguments apply to $a E$ such that $|a E|=|K|$.
\end{rem}

The log-convex density conjecture is stated in terms of the following formulation: if $h: \mathbb{R}^n \rightarrow [0,\infty)$ is radially log-convex with a smooth density ($h(x)=e^{\phi(|x|)}$, $\phi$ convex, smooth), then a ball $B$ around the origin such that $\int_B h dx=m$ solves
$$
\inf_{\int_E h dx=m} \int_{\mathcal{F} E} h d\mathcal{H}^{n-1}.
$$
There is a large collection of papers in the literature on this conjecture, e.g. \cite{MR2342613, MR4035846, MR3864811, MR3116018, MR2858470, MR3581260} (see also Remark \ref{le}), and results vary in terms of the assumptions on $\phi$: if $\phi \in C^3(0,\infty)$, the above was proved by Chambers (McGillivray obtained this in $\mathbb{R}^2$ assuming $\phi$ was non-decreasing, convex). The next theorem yields the analog in convex cones for $\alpha-$homogeneous, concave functions $h$ and characterizes the minimizers--up to translations--uniquely (see Corollary \ref{a1}).

\begin{thm} \label{alpha}
If $\mathcal{C} \subset \mathbb{R}_{+}^n$ is an open convex cone, $E \subset \mathcal{C}$ is a set of finite perimeter, $h(x) \ge 0$ is concave and $\alpha-$homogeneous, $\alpha \ge 0$, then 
$$
(n+\alpha-1) \big(\frac{|K|}{|E|}\big)^{\frac{1}{n}} \int_E h dx+\frac{1}{\big(\frac{|K|}{|E|}\big)^{\frac{n+\alpha-1}{n}}}\int_K h dx  \le  \int_{\mathcal{F} E \cap \mathcal{C}} h d \mathcal{H}^{n-1}.
$$
Suppose $h(x)>0$ for some $x \in \mathcal{C}$ and that equality holds, then $aE=$ (mod translations) $K$.
\end{thm}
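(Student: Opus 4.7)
The plan is to run the optimal-transport / Monge--Amp\`ere scheme of Theorem~\ref{OT}, with two modifications: a scaling to compensate for $|E|\neq|K|$, and the use of concavity together with Euler's identity to convert $\nabla h\cdot T$ into weighted volume terms. Set $s=(|E|/|K|)^{1/n}$, so $|sK|=|E|$, and let $T=\nabla\phi$ (with $\phi$ convex) denote the Brenier map pushing $\chi_E\,dx$ onto $\chi_{sK}\,dy$. Then $\det DT=1$ a.e.\ on $E$, and $T(x)\in \overline{sK}\subset \overline{B_s}\cap\overline{\mathcal{C}}$ gives $|T|\le s$ and $T\in\overline{\mathcal{C}}$.

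Repeating the integration-by-parts chain from Theorem~\ref{OT}, AM--GM applied to $DT$ yields $n=n(\det DT)^{1/n}\le \text{div}\,T$ a.e., so
$$\int_E nh\,dx\le \int_{\mathcal{F}E} h\,T\cdot\nu_E\,d\mathcal{H}^{n-1}-\int_E \nabla h\cdot T\,dx.$$
The main new step bounds $-\int_E\nabla h\cdot T\,dx$ from above: concavity yields $h(T(x))\le h(x)+\nabla h(x)\cdot(T(x)-x)$, and Euler's identity $\nabla h(x)\cdot x=\alpha h(x)$ (a.e., since concave $h$ is locally Lipschitz) rearranges to $\nabla h(x)\cdot T(x)\ge h(T(x))+(\alpha-1)h(x)$. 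A change of variables using $\det DT=1$ and $\alpha$-homogeneity converts $\int_E h(T(x))\,dx$ into $\int_{sK} h\,dy=s^{n+\alpha}\int_K h\,dy$. For the boundary, split $\mathcal{F}E$ into its part in $\mathcal{C}$ (where $T\cdot\nu_E\le|T|\le s$) and its part on $\partial\mathcal{C}$ (where the outer normal of $E$ coincides with that of the convex cone, so $T\in\overline{\mathcal{C}}$ forces $T\cdot\nu_E\le 0$); combined with $h\ge 0$ this gives $\int_{\mathcal{F}E} h\,T\cdot\nu_E\,d\mathcal{H}^{n-1}\le s\int_{\mathcal{F}E\cap\mathcal{C}} h\,d\mathcal{H}^{n-1}$. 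Assembling,
$$(n+\alpha-1)\int_E h\,dx+s^{n+\alpha}\int_K h\,dx\le s\int_{\mathcal{F}E\cap\mathcal{C}} h\,d\mathcal{H}^{n-1},$$
and dividing by $s$ together with $1/s=(|K|/|E|)^{1/n}$ and $s^{n+\alpha-1}=(|K|/|E|)^{-(n+\alpha-1)/n}$ produces the stated inequality.

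For the equality case, assuming $h(x_0)>0$ for some $x_0\in\mathcal{C}$, each inequality above must saturate on $\{h>0\}\cap E$: AM--GM equality with $\det DT=1$ forces $DT=\mathrm{Id}$ a.e.\ there, hence $T(x)=x+v$ on connected components of $\{h>0\}\cap E$; the concavity bound becoming an equality forces $h$ to be affine along the segment from $x$ to $T(x)$; and the boundary bound becoming an equality forces $T\cdot\nu_E=s$ on $\mathcal{F}E\cap\mathcal{C}\cap\{h>0\}$. The main obstacle is to propagate the relation $T(x)=x+v$ from $\{h>0\}\cap E$ to all of $E$ and deduce $E+v=sK$ modulo null sets: here one exploits that concavity plus $\alpha$-homogeneity makes $\{h>0\}$ an open convex cone with $\{h=0\}\cap\mathcal{C}$ of zero Lebesgue measure whenever $h\not\equiv 0$, so the rigidity transfers to $E$ as a whole, giving $aE=K$ modulo translations with $a=1/s=(|K|/|E|)^{1/n}$.
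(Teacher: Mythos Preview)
Your proof is correct and follows essentially the same optimal-transport scheme as the paper: AM--GM on $DT$, Euler's identity plus concavity to control $\nabla h\cdot T$, and the convex-cone boundary estimate. The only cosmetic difference is that the paper first proves the case $|E|=|K|$ by transporting $E$ to $K$ and then rescales $E$ via homogeneity, whereas you absorb the scaling into the target by transporting $E$ directly to $sK$; the two are equivalent, and your one-shot version is arguably slightly cleaner.
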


\begin{proof}
If $\alpha=0$, $h=\text{constant}$, and the claim follows via the relative isoperimetric inequality in convex cones; therefore without loss of generality, $\alpha>0$. Let $|E|=|K|$, $d\mu^+=\chi_Edx$, $d\mu^-=\chi_Kdy$, and $T_{\#}\mu^+=d\mu^-$ denote the optimal map (i.e. $T=\nabla \phi$, $\phi$ convex) then for a.e. $x \in E$
$$
\det DT(x)=1.
$$
Note that since $h$ is $\alpha-$homogeneous, $\alpha h(x)=\nabla h(x) \cdot x$ and concavity yields 
$$
\nabla h(x) \cdot T=\nabla h(x) \cdot (T-x)+\alpha h(x) \ge h(T)-h(x)(1-\alpha).
$$ 
Thus
\begin{align*}
\int_E nhdx&= \int_E n\big(\det DT\big)^{\frac{1}{n}} hdx\\
&\le \int_E (\text{divT})hdx=\int_E \text{divTh} dx- \int_E \nabla h \cdot \text{T}dx\\
&\le \int_E \text{divTh} dx -\int_K h dx+(1-\alpha)\int_E h(x)dx\\
&\le \int_{E^{(1)}} \text{divTh}dx+(\text{DivTh})_s(E^{(1)})-\int_K h dx+(1-\alpha)\int_E h(x)dx\\
&=\text{DivTh}(E^{(1)})-\int_K h dx+(1-\alpha)\int_E h(x)dx\\
&= \int_{\mathcal{F}E} \text{tr}_E(Th) \cdot \nu_E(x) d\mathcal{H}^{n-1}-\int_K h dx+(1-\alpha)\int_E h(x)dx\\
&\le \int_{\mathcal{F}E \cap \mathcal{C}} h d \mathcal{H}^{n-1}-\int_K h dx+(1-\alpha)\int_E h(x)dx
\end{align*}
($E^{(1)}$ is the set of density $1$, $\text{(DivTh)}_s$ is the singular part of the measure
DivTh, and $\text{tr}_E$ is the trace). 
Thus
$$
(n+\alpha-1) \int_E h dx+\int_K h dx  \le  \int_{\mathcal{F} E \cap \mathcal{C}} h d \mathcal{H}^{n-1}.
$$
In the case of equality, if $\mathcal{C}$ does not contain a line note that  
$$
\nabla h(x) \cdot T= h(T)-h(x)(1-\alpha)
$$  
a.e., and thus in the case of strictly concave densities $h$, $T(x)=x$ a.e.; if $\mathcal{C}$ contains a line, equality holds up to translations; in the case when $h$ is concave but not strictly concave, 
$$
\int_E n\big(\det DT\big)^{\frac{1}{n}} hdx=\int_E (\text{divT})hdx;
$$
in particular, there is a.e. equality in the arithmetic-geometric mean inequality which yields $DT=\text{Id}$ a.e. and 
$$\int_{\mathcal{F}E} \text{tr}_E(Th) \cdot \nu_E(x) d\mathcal{H}^{n-1}
= \int_{\mathcal{F}E \cap \mathcal{C}} h d \mathcal{H}^{n-1}$$
implies $T(x)=x+x_0$ for $x_0 \in \mathbb{R}^n$.  
Suppose $|E|\neq |K|$ and let $|aE|=|K|$. The homogeneity implies  
$$
\int_{aE} hdx=a^{n+\alpha} \int_E hdx,
$$
$$
\int_{\mathcal{F} (aE) \cap \mathcal{C}} h d \mathcal{H}^{n-1}=a^{n+\alpha-1}\int_{\mathcal{F} E \cap \mathcal{C}} h d \mathcal{H}^{n-1},
$$
thus
$$
(n+\alpha-1) \big(\frac{|K|}{|E|}\big)^{\frac{1}{n}} \int_E h dx+\frac{1}{\big(\frac{|K|}{|E|}\big)^{\frac{n+\alpha-1}{n}}}\int_K h dx  \le  \int_{\mathcal{F} E \cap \mathcal{C}} h d \mathcal{H}^{n-1}.
$$

\end{proof}
\begin{rem}
In the case of strictly concave $\alpha-$homogeneous densities $h \ge 0$, if $\mathcal{C}\subsetneq \mathbb{R}_{+}^n$ and equality holds, then $aE=K.$
\end{rem}

\begin{cor} \label{a1}
If $\mathcal{C} \subset \mathbb{R}_{+}^n$ is an open convex cone, $E \subset \mathcal{C}$ is a set of finite perimeter, $h(x) \ge 0$ is concave and $\alpha-$homogeneous, $\alpha \ge 0$, then 
if $\int_E h dx=\int_K h dx$, 
$$
\frac{1}{n+\alpha}\Big((n+\alpha-1) \big(\frac{|K|}{|E|}\big)^{\frac{1}{n}}+\frac{1}{\big(\frac{|K|}{|E|}\big)^{\frac{n+\alpha-1}{n}}}\Big)\int_{\partial K \cap \mathcal{C}} hd \mathcal{H}^{n-1} \le  \int_{\mathcal{F} E \cap \mathcal{C}} h d \mathcal{H}^{n-1},
$$
and equality holds if and only if $aE=$ (mod translations) $K$, $a>0$;
in particular
$$
\int_{\partial K \cap \mathcal{C}} h d \mathcal{H}^{n-1}=\inf_{\int_E h dx=\int_K h dx} \int_{\mathcal{F} E \cap \mathcal{C}} h d \mathcal{H}^{n-1}
$$
and the infimum is attained uniquely by $E=$ (mod translations) $K$.
\end{cor}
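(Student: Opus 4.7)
The plan is to derive the corollary as a direct consequence of Theorem \ref{alpha}, combined with the observation that $E=K$ itself realizes equality in that theorem and a single weighted AM--GM step. First I would apply Theorem \ref{alpha} to the given $E$ to obtain
\[
(n+\alpha-1)\bigl(|K|/|E|\bigr)^{1/n}\int_E h\,dx+\bigl(|K|/|E|\bigr)^{-(n+\alpha-1)/n}\int_K h\,dx\le \int_{\mathcal{F}E\cap\mathcal{C}} h\,d\mathcal{H}^{n-1},
\]
and use the hypothesis $\int_E h\,dx=\int_K h\,dx$ to factor $\int_K h\,dx$ out of the left-hand side.

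Next I would specialize Theorem \ref{alpha} to $E=K$. Here $a=1$ and the optimal transport map from $K$ to $K$ is the identity, so every inequality in the proof of that theorem is sharp; the result is the normalization identity
\[
(n+\alpha)\int_K h\,dx=\int_{\partial K\cap\mathcal{C}} h\,d\mathcal{H}^{n-1}.
\]
Substituting this expression for $\int_K h\,dx$ into the factored inequality immediately yields the displayed inequality of the corollary.

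For the ``in particular'' claim, I would set $t:=(|K|/|E|)^{1/n}>0$ and apply weighted AM--GM with weights $\tfrac{n+\alpha-1}{n+\alpha}$ on $t$ and $\tfrac{1}{n+\alpha}$ on $t^{-(n+\alpha-1)}$:
\[
\frac{1}{n+\alpha}\bigl((n+\alpha-1)t+t^{-(n+\alpha-1)}\bigr)\ge t^{(n+\alpha-1)/(n+\alpha)}\cdot t^{-(n+\alpha-1)/(n+\alpha)}=1,
\]
with equality if and only if $t=1$, i.e.\ $|E|=|K|$. Multiplying through by $\int_{\partial K\cap\mathcal{C}} h\,d\mathcal{H}^{n-1}$ then shows that $K$ attains the infimum.

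The only subtle point, and the step I would check with most care, is the equality characterization: equality in the final display forces simultaneous equality in the AM--GM step (giving $|E|=|K|$, hence $a=1$) and in Theorem \ref{alpha} (giving $aE=K$ mod translations), and these together collapse to $E=K$ mod translations; the converse is immediate by specialization. I do not foresee any serious obstacle beyond keeping these two equality conditions aligned.
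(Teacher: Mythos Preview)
Your proposal is correct and follows essentially the same route as the paper: apply Theorem \ref{alpha}, use the normalization $(n+\alpha)\int_K h\,dx=\int_{\partial K\cap\mathcal{C}} h\,d\mathcal{H}^{n-1}$, then show the coefficient in front of $\int_{\partial K\cap\mathcal{C}} h\,d\mathcal{H}^{n-1}$ is $\ge 1$ with equality only at $|E|=|K|$, and finish with the equality case of the theorem. The only cosmetic differences are that the paper states the normalization identity directly from $\alpha$-homogeneity (rather than by specializing the theorem to $E=K$) and minimizes $g(a)=ka+a^{-k}$ by calculus rather than your weighted AM--GM; both variants are equivalent.
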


\begin{proof}
First, the homogeneity implies (without loss $\alpha>0$)
$$
\int_K h dx = \frac{1}{n+\alpha} \int_{\partial K \cap \mathcal{C}} h d \mathcal{H}^{n-1},
$$
and the first result follows from the theorem. Set  
$$
g(a)=k a+\frac{1}{a^k},
$$
$a\ge 0$, $k=n+\alpha-1$; a simple calculation shows that the minimum is attained at $a=1$, and this yields 
$$
\int_{\partial K \cap \mathcal{C}} h d \mathcal{H}^{n-1} \le \int_{\mathcal{F} E \cap \mathcal{C}} h d \mathcal{H}^{n-1}
$$ when 
$\int_E h dx=\int_K h dx$. 
Suppose equality is attained, then
$$\frac{1}{n+\alpha}\Big((n+\alpha-1) \big(\frac{|K|}{|E|}\big)^{\frac{1}{n}}+\frac{1}{\big(\frac{|K|}{|E|}\big)^{\frac{n+\alpha-1}{n}}}\Big)=1,$$ and
this implies $|E|=|K|$; hence $E=$ (mod translations) $K$ by the first claim. 
\end{proof}
The inequality in Corollary \ref{a1} implies a quantitative estimate, cf. \cite{cinti2020sharp, MR3487241}.
\begin{cor}
If $\mathcal{C} \subset \mathbb{R}_{+}^n$ is an open convex cone, $E \subset \mathcal{C}$ is a set of finite perimeter, $h(x) \ge 0$ is concave and $\alpha-$homogeneous, $\alpha \ge 0$, then 
if $\int_E h dx=\int_K h dx$, 
$$
 \int_{\mathcal{F} E \cap \mathcal{C}}h d \mathcal{H}^{n-1}-\int_{\partial K \cap \mathcal{C}} hd \mathcal{H}^{n-1} \ge \int_{\partial K \cap \mathcal{C}} hd \mathcal{H}^{n-1}\Big(1- \frac{1}{n+\alpha}\Big((n+\alpha-1) \big(\frac{|K|}{|E|}\big)^{\frac{1}{n}}+\frac{1}{\big(\frac{|K|}{|E|}\big)^{\frac{n+\alpha-1}{n}}}\Big)\Big),
$$
and 
$$
\int_{\mathcal{F} E \cap \mathcal{C}}h d \mathcal{H}^{n-1}=\int_{\partial K \cap \mathcal{C}} hd \mathcal{H}^{n-1}
$$
if and only if $E=$ (mod translations) $K$.
\end{cor}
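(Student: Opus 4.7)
The plan is to derive this quantitative estimate as a direct algebraic rearrangement of Corollary \ref{a1}, then handle the equality case by a two-step analysis that separates the perimeter comparison from the volume-scaling factor.

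First, starting from the inequality in Corollary \ref{a1},
$$
\frac{1}{n+\alpha}\Big((n+\alpha-1) \big(\tfrac{|K|}{|E|}\big)^{\frac{1}{n}}+\frac{1}{\big(\tfrac{|K|}{|E|}\big)^{\frac{n+\alpha-1}{n}}}\Big)\int_{\partial K \cap \mathcal{C}} h\,d\mathcal{H}^{n-1} \le \int_{\mathcal{F} E \cap \mathcal{C}} h\, d\mathcal{H}^{n-1},
$$
I would simply subtract $\int_{\partial K \cap \mathcal{C}} h\,d\mathcal{H}^{n-1}$ from both sides and collect terms on the right. Writing the weighted boundary mass of $K$ as a common factor produces exactly the displayed lower bound for $\int_{\mathcal{F} E \cap \mathcal{C}} h\,d\mathcal{H}^{n-1}-\int_{\partial K \cap \mathcal{C}} h\,d\mathcal{H}^{n-1}$. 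Note that the parenthesized factor $1-\frac{1}{n+\alpha}(\cdots)$ can be negative (when $|E|\neq|K|$), so the corollary is really meaningful as a signed quantitative improvement of the corresponding isoperimetric deficit.

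For the equality characterization, I would argue in two stages. Suppose equality holds. By the scalar inequality $g(a)=ka+a^{-k}\ge k+1=g(1)$ for $k=n+\alpha-1\ge0$, $a\ge 0$, which was proved in Corollary \ref{a1}, the coefficient $\frac{1}{n+\alpha}\big((n+\alpha-1)(|K|/|E|)^{1/n}+(|K|/|E|)^{-(n+\alpha-1)/n}\big)$ is always at least $1$, with equality only when $|E|=|K|$. Hence equality in the displayed inequality forces, via Corollary \ref{a1}, both $|E|=|K|$ and the lower bound in Corollary \ref{a1} to be saturated; invoking the equality characterization of Corollary \ref{a1} then gives $E=K$ modulo translations. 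The reverse direction (that $E=K$ mod translations yields equality) is immediate from the formula $\int_{\partial K\cap \mathcal{C}} h\,d\mathcal{H}^{n-1}=(n+\alpha)\int_K h\,dx$ that comes from $\alpha$-homogeneity.

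The only genuinely delicate step is verifying that the equality characterization from Corollary \ref{a1} transfers cleanly; but since that corollary already states equality iff $aE=K$ mod translations and the volume condition $|E|=|K|$ forces $a=1$, no further work is required. Thus the entire argument reduces to algebra plus a citation of Corollary \ref{a1}, and I do not anticipate a serious obstacle.
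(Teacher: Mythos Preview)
Your proposal is correct and matches the paper's treatment: the paper gives no separate proof for this corollary, remarking only that ``the inequality in Corollary \ref{a1} implies a quantitative estimate,'' which is precisely your subtract-and-rearrange argument. The equality clause you spell out in two steps is likewise nothing more than the uniqueness statement already proved in Corollary \ref{a1}, so a direct citation there suffices.
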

\begin{rem}
$\alpha$-homogeneous functions appear as blow-up limits in free boundary problems \cite{MR4085452, MR3986537, MR3957397}.
\end{rem}

\begin{cor}
If $h(x) \ge 0$ is concave, $\alpha-$homogeneous for $x \in \{x_n>0\}$ ($\alpha > 0$), $h(x',0)=0$, $h(x',x_n)=h(x',-x_n)$, then 
$$
\int_{\partial B_R \cap \{x_n>0\}} h d \mathcal{H}^{n-1}=\inf_{\int_E h dx=\int_{B_1} h dx, E \subset \mathbb{R}^n } \int_{\mathcal{F} E } h d \mathcal{H}^{n-1},
$$
$R=2^{\frac{1}{n+\alpha}}$ and the infimum is attained in the collection of sets of finite perimeter uniquely by $E=$ (mod translations) $B_R \cap \{x_n>0\}$.

\end{cor}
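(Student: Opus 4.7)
The plan is to exploit the evenness of $h$ and its vanishing on $\{x_n=0\}$ to split $E$ along the hyperplane and reduce to two applications of Corollary \ref{a1} inside the half-space cone $\mathcal{C}=\{x_n>0\}$. Set $K=B_1\cap\{x_n>0\}$, $m_K:=\int_K h\,dx$, $E^\pm:=E\cap\{\pm x_n>0\}$, and $m^\pm:=\int_{E^\pm}h\,dx$; the evenness $h(x',x_n)=h(x',-x_n)$ gives $m^++m^-=2m_K$, and because $h(x',0)=0$ the weighted perimeter splits as
\begin{equation*}
\int_{\mathcal{F}E}h\,d\mathcal{H}^{n-1}=\int_{\mathcal{F}E\cap\{x_n>0\}}h\,d\mathcal{H}^{n-1}+\int_{\mathcal{F}E\cap\{x_n<0\}}h\,d\mathcal{H}^{n-1},
\end{equation*}
with the trace on $\{x_n=0\}$ contributing zero. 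Reflecting $E^-$ across $\{x_n=0\}$ produces a subset of $\mathcal{C}$ with identical $h$-mass and $h$-perimeter, so both halves can be treated uniformly in $\mathcal{C}$.

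Next, I apply Theorem \ref{alpha} to $E^+$ and to the reflection of $E^-$ inside $\mathcal{C}$. Eliminating the dependence on $|E^\pm|$ via the weighted AM-GM inequality applied to the two terms on the left of the theorem's bound---equivalently, rescaling $E^\pm$ so that its $h$-mass becomes $m_K$ and then invoking Corollary \ref{a1}---I obtain the scale-free estimate
\begin{equation*}
\int_{\mathcal{F}E^\pm\cap\{x_n>0\}}h\,d\mathcal{H}^{n-1}\ge (n+\alpha)(m^\pm)^{\beta}m_K^{1-\beta},\qquad \beta:=\tfrac{n+\alpha-1}{n+\alpha}\in(0,1).
\end{equation*}
Summing and enforcing $m^++m^-=2m_K$, the problem reduces to minimizing $s\mapsto s^\beta+(2m_K-s)^\beta$ on $[0,2m_K]$. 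Since $\beta<1$ this function is strictly concave, so its minimum is attained at the endpoints, yielding $(m^+)^\beta+(m^-)^\beta\ge(2m_K)^\beta$. Combined with the identity $\int_{\partial K\cap\{x_n>0\}}h\,d\mathcal{H}^{n-1}=(n+\alpha)m_K$ from the proof of Corollary \ref{a1} and the $(n+\alpha-1)$-homogeneity of the weighted perimeter under scaling, the resulting lower bound coincides with $\int_{\partial B_R\cap\{x_n>0\}}h\,d\mathcal{H}^{n-1}$ precisely when $R=2^{1/(n+\alpha)}$.

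For uniqueness, equality throughout forces (i) the concave minimization to saturate at an endpoint, i.e., $m^+=0$ or $m^-=0$; since $h>0$ on the open half-space---a consequence of $h\not\equiv 0$ together with the concavity and $\alpha$-homogeneity of $h$---one of $E^\pm$ must have Lebesgue measure zero, and (ii) equality in Theorem \ref{alpha} for the surviving half, which yields $aE^+=K$ modulo horizontal translations along $\{x_n=0\}$; matching $h$-masses with $m^+=2m_K$ then pins down $a=2^{-1/(n+\alpha)}=R^{-1}$, so $E$ is a horizontal translate of $B_R\cap\{x_n>0\}$ (or its reflection across $\{x_n=0\}$). The main obstacle I anticipate is the BV bookkeeping underlying the reflection step---identifying $\mathcal{F}E^+\cap\{x_n>0\}$ and the reflected $\mathcal{F}E^-\cap\{x_n<0\}$ with the corresponding pieces of $\mathcal{F}E$ up to $\mathcal{H}^{n-1}$-null sets and confirming that the $\{x_n=0\}$-trace contributes zero weighted mass---but both are standard consequences of BV calculus combined with the vanishing $h(x',0)=0$.
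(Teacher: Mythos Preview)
Your argument is correct and follows essentially the same route as the paper: split $E$ along $\{x_n=0\}$, apply the half-space weighted isoperimetric inequality (Corollary \ref{a1}) to each piece, and then optimize over the distribution of $h$-mass between the two halves. The only cosmetic difference is that the paper parametrizes the optimization by the radii $R_1,R_2$ (minimizing $R_1^{n+\alpha-1}+R_2^{n+\alpha-1}$ subject to $R_1^{n+\alpha}+R_2^{n+\alpha}=2$), whereas you parametrize by the masses $m^\pm$ and invoke the strict concavity of $s\mapsto s^\beta$; these are the same problem under $m^\pm=R_i^{\,n+\alpha}m_K$, and your treatment of the equality case is in fact more explicit than the paper's.
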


\begin{proof}
If $E \subset \mathbb{R}^n$ is a minimizer, let 
$E^{+}=E \cap \{x_n>0\}$, $E^{-}=E \cap \{x_n<0\}$, 
$$
\int_{E^+} h dx+\int_{E^-} h dx=\int_{B_1} h dx.
$$
The theorem implies
$$
\int_{\partial E^+\cap \{x_n>0\}} hd \mathcal{H}^{n-1} +\int_{E^-\cap \{x_n<0\}} h d \mathcal{H}^{n-1}\ge \int_{\partial B_{R_1}\cap \{x_n>0\}} h d \mathcal{H}^{n-1}+\int_{\partial B_{R_2}\cap \{x_n<0\}} h d \mathcal{H}^{n-1},
$$
$$
\int_{B_{R_1}\cap \{x_n>0\}} h dx=\int_{E^+} h dx, \int_{B_{R_2}\cap \{x_n<0\}} h dx=\int_{E^-} h dx; 
$$
in particular, 
$$
\int_{\partial E} hd \mathcal{H}^{n-1}\ge \frac{1}{2}\Big(R_1^{n+\alpha-1}+R_2^{n+\alpha-1}\Big)\int_{\partial B_1} h d \mathcal{H}^{n-1},
$$
$$
\frac{1}{2}\Big( R_1^{n+\alpha}+R_2^{n+\alpha}\Big)=1;$$ and
minimizing 
$$
R_1^{n+\alpha-1}+R_2^{n+\alpha-1}
$$
subject to 
$$
\frac{1}{2}\Big( R_1^{n+\alpha}+R_2^{n+\alpha}\Big)=1
$$
implies $R_1=2^{\frac{1}{n+\alpha}}, R_2=0$ (without loss). 

\end{proof}
\begin{rem}
Let $h(x,y)=y$  if $y \ge 0$ and extend via even reflection. Note that 

$$
\int_{B_{R}^+}hdxdy=R^3\int_{B_{1}^+}hdxdy=R^3 \int_0^\pi \int_0^1 \sin \theta rdrd\theta =R^3; 
$$

$$
\int_{B_{R}}hdxdy=2R^3;
$$

$$
\int_{\partial B_{R} \cap \{y>0\}}hd \mathcal{H}^{1}=R^2\int_{\partial B_{1}\cap \{y>0\}}hd \mathcal{H}^{1}=R^2 \int_0^\pi \sin \theta d\theta= 2R^2; 
$$

$$
\int_{\partial B_{R} }hd \mathcal{H}^{1}=4R^2; 
$$
this implies 
$$
\int_{B_{R_*}}hdxdy=2R_{*}^3=\int_{B_{R}^+}hdxdy=R^3
$$
if and only if
$$
R_{*}=\frac{R}{2^{\frac{1}{3}}};
$$
thus

$$
\int_{\partial B_{R_*} }hd \mathcal{H}^{1}=4R_*^2=\frac{2}{2^{\frac{2}{3}}}\int_{\partial B_{R} \cap \{y>0\}}hd \mathcal{H}^{1}>\int_{\partial B_{R} \cap \{y>0\}}hd \mathcal{H}^{1}.
$$

\end{rem}

In the next corollary, the ``concave--$\alpha$ analog" of Theorem \ref{alpha} is stated which is a strict improvement of a ``concave--$\frac{1}{\alpha}$ analog" of a  theorem in \cite{MR3576542}, see Remark \ref{a5}; the equality cases have only recently appeared on arXiv \cite{cinti2020sharp} after the first version of this paper was submitted.

\begin{cor} \label{a2}

If $\mathcal{C} \subset \mathbb{R}_{+}^n$ is an open convex cone, $E \subset \mathcal{C}$ is a set of finite perimeter, $h(x) \ge 0$ is concave and $\alpha-$homogeneous, then 

$$
\int_{\partial K \cap \mathcal{C}} hdx \le \frac{(n+\alpha)\big(\frac{\int_K hdx}{\int_E hdx}\big)\big(\frac{|K|}{|E|}\big)^{\frac{n+\alpha-1}{n}}}{\big(\frac{\int_K hdx}{\int_E hdx}\big)+(n+\alpha-1)\big(\frac{|K|}{|E|}\big)^{\frac{n+\alpha}{n}}} \int_{\mathcal{F} E \cap \mathcal{C}} h d \mathcal{H}^{n-1}.
$$
Suppose $h(x)>0$ for some $|x|\neq 0$ and that equality holds, then $aE=$ (mod translations) $K$.

\end{cor}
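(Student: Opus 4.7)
The plan is to deduce Corollary \ref{a2} as a direct algebraic rearrangement of the inequality in Theorem \ref{alpha}, combined with the Euler-type identity already used in the proof of Corollary \ref{a1}. Introducing the two dimensionless ratios
$$\beta := \frac{|K|}{|E|}, \qquad \gamma := \frac{\int_K h\,dx}{\int_E h\,dx},$$
I first rewrite Theorem \ref{alpha} in the equivalent form
$$\int_K h\,dx \left( \frac{(n+\alpha-1)\beta^{1/n}}{\gamma} + \beta^{-(n+\alpha-1)/n} \right) \le \int_{\mathcal{F} E \cap \mathcal{C}} h\,d\mathcal{H}^{n-1}.$$

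Next, I would substitute the identity $(n+\alpha)\int_K h\,dx = \int_{\partial K \cap \mathcal{C}} h\,d\mathcal{H}^{n-1}$, which follows from the divergence theorem applied to the vector field $hx$ on $K$ together with Euler's relation $\nabla h \cdot x = \alpha h$ (exactly as recorded at the beginning of the proof of Corollary \ref{a1}). Isolating $\int_{\partial K \cap \mathcal{C}} h\,d\mathcal{H}^{n-1}$ on the left-hand side and then clearing the negative exponents by multiplying numerator and denominator of the resulting fraction by $\gamma\,\beta^{(n+\alpha-1)/n}$ produces exactly the bound stated in the corollary.

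For the rigidity statement, the passage from Theorem \ref{alpha} to Corollary \ref{a2} uses only equalities (the Euler identity and purely algebraic rearrangement), so equality in the corollary is equivalent to equality in Theorem \ref{alpha}. Under the hypothesis that $h(x)>0$ for some $|x|\neq 0$, $\alpha$-homogeneity ensures that $h$ is not identically zero on $\mathcal{C}$, so the inequality is nontrivial; the rigidity conclusion of Theorem \ref{alpha} then applies verbatim and yields $aE = K$ modulo translations.

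The main obstacle is essentially bookkeeping: the two ratios $\beta = |K|/|E|$ and $\gamma = \int_K h\,dx / \int_E h\,dx$ are independent (they coincide only when $E$ is already a dilate of $K$), so one must carry them through separately and track the correct powers of $\beta$ in the denominator in order to land on exactly the fraction that appears in the statement.
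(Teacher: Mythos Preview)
Your proposal is correct and follows essentially the same route as the paper. The paper also states that ``the inequality is immediate via Theorem \ref{alpha},'' which is exactly your algebraic rearrangement (you just spell out the bookkeeping with $\beta$ and $\gamma$ that the paper omits). For the equality case, the only cosmetic difference is that the paper re-runs the key steps of the optimal-transport argument (equality in AM--GM forces $DT=\mathrm{Id}$, then the boundary identity forces $T(x)=x+x_0$), whereas you simply invoke the rigidity clause of Theorem \ref{alpha} directly; since that clause has already been proved, your shortcut is legitimate and arguably cleaner.
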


\begin{proof}
The inequality is immediate via Theorem \ref{alpha}. First, assume $|E|=|K|$ and that equality holds; then 
\begin{equation*} 
\int_E n\big(\det DT\big)^{\frac{1}{n}} hdx=\int_E (\text{divT})hdx;
\end{equation*}
in particular, there is a.e. equality in the arithmetic-geometric mean inequality which yields $DT=\text{Id}$ a.e. and $$\int_{\mathcal{F}E} \text{tr}_E(Th) \cdot \nu_E(x) d\mathcal{H}^{n-1}
= \int_{\mathcal{F}E \cap \mathcal{C}} h d \mathcal{H}^{n-1}$$
implies $T(x)=x+x_0$ for $x_0 \in \mathbb{R}^n$. If $|E|\neq |K|$, let $|aE|=|K|$;  the result follows via the previous argument applied to $aE$.
\end{proof}

\begin{rem} \label{a5}
In \cite{MR3576542}, the authors obtain the following:
if $\mathcal{C} \subset \mathbb{R}_{+}^n$ is an open convex cone, $E \subset \mathcal{C}$ is a set of finite perimeter, $h(x) \ge 0$ is $\alpha-$homogeneous $\&$ $h^{\frac{1}{\alpha}}$ is concave, then 
$$
\int_{\partial K \cap \mathcal{C}} hdx \le \Big(\frac{\int_K hdx}{\int_E hdx}\Big)^{\frac{n+\alpha-1}{n+\alpha}} \int_{\mathcal{F} E \cap \mathcal{C}} h d \mathcal{H}^{n-1}.
$$
Define $\overline{f}(x,y):=\frac{(n+\alpha)xy^{\frac{n+\alpha-1}{n}}}{x+(n+\alpha-1)y^{\frac{n+\alpha}{n}}}$ for $x,y>0$. Then whenever the function $h$  is concave, $\alpha-$homogeneous and $h^{\frac{1}{\alpha}}$ is concave (e.g. $0<\alpha \le1$) 
Corollary \ref{a2} improves the inequality if and only if 
$$
\overline{f}(x,y) < x^{\frac{n+\alpha-1}{n+\alpha}}.
$$  
Set 
$$f(x,y)=x^{-\frac{1}{n+\alpha}}-\frac{(n+\alpha)y^{\frac{n+\alpha-1}{n}}}{x+(n+\alpha-1)y^{\frac{n+\alpha}{n}}}
=x^{-\frac{1}{n+\alpha}}-\frac{(n+\alpha)}{xy^{\frac{1-n-\alpha}{n}}+(n+\alpha-1)y^{\frac{1}{n}}};$$
fix $y>0$ and note that 
$$
f(x,y) \rightarrow \infty, \hskip .2in x \rightarrow 0^+;
$$
$$
f(x,y) \rightarrow 0, \hskip .2in x \rightarrow \infty;
$$
if $x>0$ is fixed,
$$
f(x,y) \rightarrow x^{-\frac{1}{n+\alpha}}, \hskip .2in y \rightarrow 0^+, \infty;
$$
$$
\partial_y f(x,y)=\frac{(n+\alpha)}{n}\Big(\frac{(n+\alpha-1)y^{\frac{1-n}{n}}+(1-n-\alpha)xy^{\frac{1-2n-\alpha}{n}}}{(xy^{\frac{1-n-\alpha}{n}}+(n+\alpha-1)y^{\frac{1}{n}})^2}\Big)
$$
and thus 
$$
\partial_y f(x,y)=0
$$ 
when $y(x)=x^{\frac{n}{n+\alpha}}$ and 

$$
f(x,y(x))=0. 
$$ 
This yields that if $x,y>0$, $f(x,y)\ge 0$ with equality if and only if $y =x^{\frac{n}{n+\alpha}}$. 
If $|E|=|K|$,  
 
$$
f(x,1)=x^{-\frac{1}{n+\alpha}}-\frac{n+\alpha}{x+n+\alpha-1} \ge 0
$$ 
and is equal if and only if $x=1$: if $x>0$
$$
h(x)=x^{\frac{1}{n+\alpha}}=1+\frac{1}{(n+\alpha)}(x-1)+h''(a_x)(x-1)^2 \le 1+\frac{1}{(n+\alpha)}(x-1);  
$$
in particular, when $y(x)\neq x^{\frac{n}{n+\alpha}}$ Corollary \ref{a2} \text{\bf is a strict improvement} of the result in \cite{MR3576542} and is equivalent when $y(x)=x^{\frac{n}{n+\alpha}}$.  
\end{rem}
\begin{rem}
If $\mathcal{C}^{n+1}$ is a manifold with density $h$ (a connected manifold with a Riemannian metric $\langle \cdot, \cdot \rangle$ and smooth positive function weighing the Hausdorff measures associated to the Riemannian distance), then if $h$ is $\alpha-$homogeneous,
$$D^2 h^{\frac{1}{\alpha}} =\frac{-1}{\alpha} h^{\frac{1}{\alpha}} \text{Ric}_h^\alpha=\frac{-1}{\alpha} h^{\frac{1}{\alpha}} \Big(\text{Ric} - D^2 \log h-\frac{1}{\alpha}(d \log h \otimes d \log h)\Big),
$$
where Ric is the Ricci tensor, $D^2$ is the Hessian operator for the Riemannian metric, and $\text{Ric}_h^\alpha$ is the $\alpha$-dimensional Bakry-Emery-Ricci tensor. Therefore in the case when $\alpha>0$, $h^{\frac{1}{\alpha}}$ is concave iff $ \text{Ric}_h^\alpha \ge 0$ \cite[Lemma 3.9]{MR3268875}.  
\end{rem}

{\bf Acknowledgment}

I would like to thank the organizers of the conference ``Workshop on Monge-Ampere equations: in celebration of Professor John Urbas’s 60th birthday" for the invitation to contribute a talk and a paper. Furthermore, I would like to thank Arshak Petrosyan for his question on a weighted version of the relative isoperimetric inequality in convex cones after my lecture at Purdue in November 2019.

\newpage
\bibliographystyle{amsalpha}
\bibliography{References}

\end{document}